\newtheorem{theorem}{Theorem}[section]
\newtheorem*{thmA}{Theorem A}
\newtheorem{lemma}[theorem]{Lemma}
\newtheorem{corollary}[theorem]{Corollary}
\theoremstyle{definition}
\newtheorem{remark}[theorem]{Remark}
\renewcommand{\Im}{\,{\operatorname{Im}\,}}
\renewcommand{\Re}{{\operatorname{Re}\,}}
\newcommand{\inv}{^{-1}}
\renewcommand{\arg}{\,{\operatorname{arg}\,}}
\newcommand{\aand}{{\quad\text{and}\quad}}
\numberwithin{equation}{section}
\begin{document}
\title{Some extensions of the Open Door Lemma}
\author[M. Li]{Ming Li}
\address{Graduate School of Information Sciences, \\
Tohoku University, \\
Aoba-ku, Sendai 980-8579, Japan}
\email{li@ims.is.tohoku.ac.jp}
\author[T. Sugawa]{Toshiyuki Sugawa}
\address{Graduate School of Information Sciences, \\
Tohoku University, \\ 
Aoba-ku, Sendai 980-8579, Japan}
\email{sugawa@math.is.tohoku.ac.jp}

\subjclass{Primary 30C45; Secondary 30C80}
\keywords{open door function, subordination}
\begin{abstract}
Miller and Mocanu proved in their 1997 paper
a greatly useful result which is now known as the Open Door Lemma.
It provides a sufficient condition for an analytic function on the
unit disk to have positive real part.
Kuroki and Owa modified the lemma when the initial point is non-real.
In the present note, by extending their methods, we give
a sufficient condition for an analytic function on the unit disk
to take its values in a given sector.
\end{abstract}
\thanks{
The authors were supported in part by JSPS Grant-in-Aid for
Scientific Research (B) 22340025.
}
\maketitle

\section{Introduction}
We denote by $\mathcal{H}$ the class of holomorphic functions on the unit disk
$\mathbb{D}=\{z: |z|<1\}$ of the complex plane $\mathbb{C}.$
For $a\in\mathbb{C}$ and $n\in\mathbb{N},$ let $\mathcal{H}[a,n]$ denote the subclass of
$\mathcal{H}$ consisting of functions $h$ of the form $h(z)=a+c_nz^n+c_{n+1}z^{n+1}+\cdots.$
Here, $\mathbb{N}=\{1,2,3,\dots\}.$
Let also $\mathcal{A}_n$ be the set of functions $f$ of the form $f(z)=zh(z)$ for
$h\in\mathcal{H}[1,n].$

A function $f\in\mathcal{A}_1$ is called {\it starlike} (resp.~{\it convex})
if $f$ is univalent on $\mathbb{D}$ and if the image $f(\mathbb{D})$ is starlike with respect
to the origin (resp.~convex).
It is well known (cf.~\cite{Duren:univ}) that $f\in\mathcal{A}_1$ is starlike
precisely if $q_f(z)=zf'(z)/f(z)$ has positive real part on $|z|<1,$
and that $f\in\mathcal{A}_1$ is convex precisely if $\varphi_f(z)=1+zf''(z)/f'(z)$
has positive real part on $|z|<1.$
Note that the following relation holds for those quantities:
$$
\varphi_f(z)=q_f(z)+\frac{zq_f'(z)}{q_f(z)}.
$$
It is geometrically obvious that a convex function is starlike.
This, in turn, means the implication
$$
\Re\left[q(z)+\frac{zq'(z)}{q(z)}\right]>0~\text{on}~|z|<1
\quad\Rightarrow\quad
\Re q(z)>0~\text{on}~|z|<1
$$
for a function $q\in\mathcal{H}[1,1].$
Interestingly, it looks highly nontrivial.
Miller and Mocanu developed a theory (now called {\it differential
subordination}) which enables us to deduce such a result systematically.
See a monograph ~\cite{MM:ds} written by them for details.

The set of functions $q\in\mathcal{H}[1,1]$ with $\Re q>0$ is called the
Carath\'eodory class and will be denoted by $\mathcal P.$
It is well recognized that the function $q_0(z)=(1+z)/(1-z)$
(or its rotation) maps the unit disk univalently onto the right
half-plane and is extremal in many problems.
One can observe that the function
$$
\varphi_0(z)=q_0(z)+\frac{zq_0'(z)}{q_0(z)}
=\frac{1+z}{1-z}+\frac{2z}{1-z^2}
=\frac{1+4z+z^2}{1-z^2}
$$
maps the unit disk onto the slit domain $V(-\sqrt{3},\sqrt{3}),$ where
$$
V(A,B)=\mathbb{C}\setminus \{iy: y\le A~\text{or}~y\ge B\}
$$
for $A,B\in\mathbb R$ with $A<B.$
Note that $V(A,B)$ contains the right half-plane and has the ``window"
$(Ai, Bi)$ in the imaginary axis to the left half-plane.
The Open Door Lemma of Miller and Mocanu asserts for a function
$q\in\mathcal{H}[1,1]$ that,
if $q(z)+zq'(z)/q(z)\in V(-\sqrt3,\sqrt3)$ for $z\in\mathbb{D},$ then $q\in\mathcal P.$
Indeed, Miller and Mocanu ~\cite{MM97BB} (see also ~\cite{MM:ds})
proved it in a more general form.
For a complex number $c$ with $\Re c>0$ and $n\in\mathbb{N},$ we consider the
positive number
$$
C_{n}(c)=\frac{n}{\Re c}\left[|c|\sqrt{\frac{2\Re c}{n}+1}+\Im{c}\right].
$$
In particular, $C_n(c)=\sqrt{n(n+2c)}$ when $c$ is real.
The following is a version of the Open Door Lemma
modified by Kuroki and Owa ~\cite{KO14}.

\begin{thmA}[Open Door Lemma]\label{Thm:ODL}
Let $c$ be a complex number with positive real part and $n$ be an integer
with $n\ge1.$
Suppose that a function $q\in\mathcal{H}[c,n]$ satisfies the condition
$$
q(z)+\frac{zq'(z)}{q(z)}\in V(-C_n(c),C_n(\bar c)),\quad z\in\mathbb{D}.
$$
Then $\Re q>0$ on $\mathbb{D}.$
\end{thmA}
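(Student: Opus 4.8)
The plan is to argue by contradiction and reduce to Jack's lemma by means of the M\"obius change of variable that conjugates the right half-plane to the unit disk. First observe that the hypothesis already forces $q$ to be zero-free on $\mathbb{D}$: since $q(0)=c\neq0$, any zero of $q$ would occur at some $z_{1}\neq0$, and then $zq'(z)/q(z)\to\infty$ as $z\to z_{1}$, so $q(z)+zq'(z)/q(z)$ could not remain in $V(-C_{n}(c),C_{n}(\bar c))\subset\mathbb{C}$ near $z_{1}$. Suppose now, for contradiction, that $\Re q$ is not positive throughout $\mathbb{D}$. Since $\Re q(0)=\Re c>0$, the radius $r_{0}=\sup\{\,r\in[0,1):\Re q>0\text{ on }|z|<r\,\}$ lies in $(0,1)$, and there is a point $z_{0}$ with $|z_{0}|=r_{0}$ and $q(z_{0})=iy_{0}$ for some $y_{0}\in\mathbb{R}$, where $y_{0}\neq0$ because $q$ is zero-free. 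Put $w=(q-c)/(q+\bar c)$, so that $q=(c+\bar c\,w)/(1-w)$ and a short computation gives $\Re q=\Re c\,(1-|w|^{2})/|1-w|^{2}$. As $\Re q\geq0$ on $\{|z|\leq r_{0}\}$, the function $q$ omits $-\bar c$ there, so $w$ is holomorphic on a neighbourhood of $\{|z|\leq r_{0}\}$; moreover $w\in\mathcal{H}[0,n]$, $|w|<1$ on $\{|z|<r_{0}\}$ and $|w(z_{0})|=1$. Thus $|w|$ attains its maximum over $\{|z|\leq r_{0}\}$ at the point $z_{0}$, which lies in $\mathbb{D}$ since $r_{0}<1$, and Jack's lemma (in the form valid for $\mathcal{H}[0,n]$) yields a real number $m\geq n$ with $z_{0}w'(z_{0})=m\,w(z_{0})$.

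Next I would compute $P(z_{0}):=q(z_{0})+z_{0}q'(z_{0})/q(z_{0})$ explicitly. Differentiating $q=(c+\bar c\,w)/(1-w)$ gives $q'=2\,\Re c\cdot w'/(1-w)^{2}$, whence, with $w_{0}:=w(z_{0})$ (so $|w_{0}|=1$ and $w_{0}\neq1$),
\[
P(z_{0})=\frac{c+\bar c\,w_{0}}{1-w_{0}}+\frac{2m\,\Re c\;w_{0}}{(1-w_{0})(c+\bar c\,w_{0})}.
\]
Writing $w_{0}=e^{2i\psi}$ with $\psi\in(0,\pi)$ and using $c+\bar c\,e^{2i\psi}=2e^{i\psi}\,\Re(ce^{-i\psi})$ together with $1-e^{2i\psi}=-2ie^{i\psi}\sin\psi$, one finds that $P(z_{0})$ is purely imaginary:
\[
P(z_{0})=it,\qquad t=\frac{2R^{2}+m\,\Re c}{2R\sin\psi},\qquad R:=\Re(ce^{-i\psi})=\Re c\,\cos\psi+\Im c\,\sin\psi,
\]
and moreover $y_{0}=R/\sin\psi$, so in particular $R\neq0$; since the numerator is positive, $t$ has the same sign as $R$.

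The remaining task is to show that $t\geq C_{n}(\bar c)$ when $R>0$ and $t\leq-C_{n}(c)$ when $R<0$; in either case $it$ lies in one of the two rays deleted from $V(-C_{n}(c),C_{n}(\bar c))$, contradicting the hypothesis and completing the proof. Since $m\geq n$ and $\Re c>0$, it suffices to treat $m=n$. I would carry out the case $R>0$; the case $R<0$ then follows on applying this analysis to $\overline{q(\bar z)}\in\mathcal{H}[\bar c,n]$, using $\overline{V(-C_{n}(c),C_{n}(\bar c))}=V(-C_{n}(\bar c),C_{n}(c))$. Writing $R=|c|\cos(\psi-\arg c)$ and expanding $R^{2}$ and $R\sin\psi$ by the double-angle formulas, both become affine functions of $\cos\tau$ and $\sin\tau$ with $\tau=2\psi-\arg c$, and the desired inequality $2R^{2}+n\,\Re c\geq2C_{n}(\bar c)\,R\sin\psi$ collapses to a single sinusoidal estimate of the shape $|c|^{2}+n\,\Re c-C_{n}(\bar c)\,\Im c\geq|c|\sqrt{(\Re c)^{2}+(\Im c-C_{n}(\bar c))^{2}}$. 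Squaring, this turns out to hold with equality; in fact that identity is precisely the relation that pins down the value of $C_{n}(\bar c)$.

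The crux, and the step I expect to be the main obstacle, is exactly this last point: verifying that the constants $C_{n}(c)$ and $C_{n}(\bar c)$ are the sharp ones, i.e.\ that the displayed sinusoidal estimate is an equality for the extremal $\psi$. Everything else is routine---the zero-freeness remark, the Jack-lemma reduction via the M\"obius substitution $w=(q-c)/(q+\bar c)$, and the bookkeeping of signs. As a sanity check, when $c$ is real the whole calculation collapses to minimizing a function of the form $(c(1+u)+n)/\sqrt{1-u^{2}}$ over $u\in(-1,1)$, whose minimum equals $\sqrt{n(n+2c)}=C_{n}(c)$, in agreement with the classical Open Door Lemma.
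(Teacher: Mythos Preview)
Your argument is correct and follows a genuinely different path from the paper's. The paper obtains Theorem~A as the case $\alpha=1$ of its main theorem (Theorem~\ref{thm:main}), which is proved by invoking the Miller--Mocanu subordination machinery (Lemma~\ref{lem:MM}): one checks that $Q(z)=zg_c'(z)/g_c(z)$ is starlike and that $\Re[zh'(z)/Q(z)]>0$ for $h=R_{1,c,n}$, concludes $q\prec g_c$, and then a separate boundary computation (the paragraph after Lemma~\ref{lem:2}) identifies $R_{1,c,n}(\mathbb{D})$ with $V(-C_n(c),C_n(\bar c))$. You instead argue directly via Jack's lemma after the M\"obius substitution $w=(q-c)/(q+\bar c)$, which is the elementary engine underlying the Miller--Mocanu lemma in the first place. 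Your route is more self-contained and makes the role of the constants $C_n(c),\,C_n(\bar c)$ transparent---they emerge exactly as the sharp values in your sinusoidal estimate---but it is tailored to $\alpha=1$; the paper's subordination approach generalizes immediately to the sector case $0<\alpha\le1$ and also identifies $g_c^\alpha$ as the best dominant. Your sketch of the final step is accurate: squaring $|c|^{2}+n\Re c-C\Im c\ge|c|\sqrt{(\Re c)^{2}+(\Im c-C)^{2}}$ and simplifying gives the quadratic $C^{2}\Re c+2nC\Im c-2n|c|^{2}-n^{2}\Re c=0$, whose positive root is precisely $C_n(\bar c)$.
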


\begin{remark}
In the original statement of the Open Door Lemma in ~\cite{MM97BB},
the slit domain was erroneously described as $V(-C_n(c),C_n(c)).$
Since $C_n(\bar c)<C_n(c)$ when $\Im c>0,$
we see that $V(-C_n(\bar c),C_n(\bar c))\subset V(-C_n(c),C_n(\bar c))
\subset V(-C_n(c),C_n(c))$ for $\Im c\ge0$ and the inclusions are strict
if $\Im c>0.$
As the proof will suggest us,
seemingly the domain $V(-C_n(c),C_n(\bar c))$ is maximal for
the assertion, which means that the original statement in ~\cite{MM97BB}
and the form of the associated open door function
are incorrect for a non-real $c.$
This, however, does not decrease so much the value of the original article
~\cite{MM97BB} by Miller and Mocanu because the Open Door Lemma is mostly
applied when $c$ is real.
We also note that the Open Door Lemma deals with the function
$p=1/q\in\mathcal{H}[1/c,n]$ instead of $q.$
The present form is adopted for convenience of our aim.
\end{remark}

The Open Door Lemma gives a sufficient condition for $q\in\mathcal{H}[c,n]$
to have positive real part.
We extend it so that $|\arg q|<\pi\alpha/2$ for a given $0<\alpha\le 1.$
First we note that the M\"obius transformation
$$
g_c(z)=\frac{c+\bar cz}{1-z}
$$
maps $\mathbb{D}$ onto the right half-plane in such a way that $g_c(0)=c,$
where $c$ is a complex number with $\Re c>0.$
In particular, one can take an analytic branch of $\log g_c$ so that
$|\Im\log g_c|<\pi/2.$
Therefore, the function $q_0=g_c^\alpha=\exp(\alpha\log g_c)$ maps $\mathbb{D}$ 
univalently onto the sector $|\arg w|<\pi\alpha/2$ in such a way that
$q_0(0)=c^\alpha.$
The present note is based mainly on the following result, which will
be deduced from a more general result of Miller and Mocanu (see Section 2).

\begin{theorem}\label{thm:main}
Let $c$ be a complex number with $\Re c>0$
and $\alpha$ be a real number with $0<\alpha\le 1.$
Then the function
$$
R_{\alpha,c,n}(z)=g_c(z)^\alpha+\frac{n\alpha zg_c'(z)}{g_c(z)}
=\left(\frac{c+\bar{c}z}{1-z}\right)^{\alpha}
+\frac{2n\alpha(\Re c)z}{(1-z)(c+\bar{c}z)}
$$
is univalent on $|z|<1.$
If a function $q\in\mathcal{H}[c^\alpha,n]$ satisfies the condition
$$
q(z)+\frac{zq'(z)}{q(z)}\in R_{\alpha,c,n}(\mathbb{D}),\quad z\in\mathbb{D},
$$
then $|\arg q|<\pi\alpha/2$ on $\mathbb{D}.$
\end{theorem}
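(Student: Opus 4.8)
The plan is to recognize $R_{\alpha,c,n}$ as a Briot--Bouquet differential operator applied to $q_0=g_c^\alpha$ and then to invoke the general Miller--Mocanu theorem on first-order (Briot--Bouquet) differential subordinations, the same result from which the Open Door Lemma is deduced (cf.~\cite{MM:ds}, \cite{MM97BB}). Choose the branch of $\log g_c$ with $|\Im\log g_c|<\pi/2$ and set $q_0=g_c^\alpha$. Since $g_c$ is a M\"obius map of $\mathbb{D}$ onto the right half-plane $\mathbb{H}=\{\Re w>0\}$ and $w\mapsto w^\alpha$ is univalent on $\mathbb{H}$ for $0<\alpha\le1$, the function $q_0$ is univalent on $\mathbb{D}$ with $q_0(0)=c^\alpha$ and $q_0(\mathbb{D})=\{w:|\arg w|<\pi\alpha/2\}$, a convex sector contained in $\mathbb{H}$. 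From $g_c'(z)=2(\Re c)/(1-z)^2$ one gets $zq_0'(z)/q_0(z)=\alpha\,zg_c'(z)/g_c(z)=2\alpha(\Re c)z/[(1-z)(c+\bar cz)]$, so that
\[
R_{\alpha,c,n}(z)=q_0(z)+n\,\frac{zq_0'(z)}{q_0(z)}.
\]

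Put $Q=n\,zq_0'/q_0$, so $R_{\alpha,c,n}=q_0+Q$ with $Q(0)=0$ and $Q'(0)\ne0$. The heart of the matter is that $Q$ is starlike. Logarithmic differentiation gives
\[
\frac{zQ'(z)}{Q(z)}=1+\frac{z}{1-z}-\frac{\bar cz}{c+\bar cz},
\]
and on $\mathbb{D}$ one has $\Re\bigl(1+z/(1-z)\bigr)=\Re\bigl(1/(1-z)\bigr)>1/2$, while $\Re\bigl(\bar cz/(c+\bar cz)\bigr)<1/2$: indeed $z\mapsto c+\bar cz$ maps $\mathbb{D}$ into the disk $\{|w-c|<|c|\}$, on which $w\mapsto c/w$ has real part exceeding $1/2$, so $\bar cz/(c+\bar cz)=1-c/(c+\bar cz)$ has real part less than $1/2$ (equivalently, $\Re\bigl((\bar cz-c)/(\bar cz+c)\bigr)=|c|^2(|z|^2-1)/|c+\bar cz|^2<0$, using that $\bar c^2z-c^2\bar z$ is purely imaginary). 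Hence $\Re(zQ'/Q)>0$, i.e.\ $Q$ is starlike, so $\varphi(z)=\int_0^z Q(t)/t\,dt$ is convex univalent. Since
\[
\frac{zR_{\alpha,c,n}'(z)}{Q(z)}=\frac{zq_0'(z)}{Q(z)}+\frac{zQ'(z)}{Q(z)}=\frac{q_0(z)}{n}+\frac{zQ'(z)}{Q(z)}
\]
and $\Re q_0>0$ on $\mathbb{D}$, this quantity, which equals $R_{\alpha,c,n}'/\varphi'$, has positive real part; thus $R_{\alpha,c,n}$ is close-to-convex with respect to $\varphi$, hence univalent on $\mathbb{D}$. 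This proves the first assertion.

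For the subordination statement we apply the Miller--Mocanu Briot--Bouquet theorem with $\beta=1$, $\gamma=0$: its hypotheses --- $q_0$ univalent, $\Re(\beta q_0+\gamma)=\Re q_0>0$, $Q=n\,zq_0'/(\beta q_0+\gamma)$ starlike, and $\Re\bigl(zR_{\alpha,c,n}'/Q\bigr)>0$ on $\mathbb{D}$ (together with the univalence of $R_{\alpha,c,n}$ just established) --- were all verified above. It yields that $q_0$ is the (best) dominant of the differential subordination
\[
q(z)+\frac{zq'(z)}{q(z)}\prec q_0(z)+n\,\frac{zq_0'(z)}{q_0(z)}=R_{\alpha,c,n}(z),\qquad q\in\mathcal{H}[c^\alpha,n];
\]
in particular $q\prec q_0$. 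Writing $q=q_0\circ\omega$ with $\omega$ a Schwarz function then gives $q(\mathbb{D})\subset q_0(\mathbb{D})=\{w:|\arg w|<\pi\alpha/2\}$, i.e.\ $|\arg q|<\pi\alpha/2$ on $\mathbb{D}$, as claimed.

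The principal obstacle is the starlikeness of $Q$; once it is established, the close-to-convexity (hence univalence) of $R_{\alpha,c,n}$ and the remaining hypotheses of the Miller--Mocanu theorem follow immediately, and the subordination conclusion is used as a black box. A secondary point is that one must invoke the refined form of the Briot--Bouquet theorem, valid for a non-convex dominant domain, rather than the elementary ``$h$ convex'' version, since $R_{\alpha,c,n}(\mathbb{D})$ is generally not convex --- for $\alpha=1$ it is exactly the slit domain $V(-C_n(c),C_n(\bar c))$ of the Open Door Lemma.
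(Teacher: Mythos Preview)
Your proof is correct and follows essentially the same route as the paper's own argument: you apply the Miller--Mocanu Briot--Bouquet lemma (the paper's Lemma~2.1) with $\mu=\beta=1,\ \nu=\gamma=0$ and $q_0=g_c^\alpha$, verify that $Q$ is starlike via the same logarithmic-derivative computation (the paper rewrites $zQ'/Q$ as $\tfrac12\bigl[(c-\bar cz)/(c+\bar cz)+(1+z)/(1-z)\bigr]$, an average of two Carath\'eodory functions, whereas you split it as $1/(1-z)-\bar cz/(c+\bar cz)$ and compare each part with $1/2$, but these are the same identity), and then deduce univalence of $R_{\alpha,c,n}$ from close-to-convexity with respect to a primitive of $Q/z$ exactly as the paper does. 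Your closing remark that the ``$Q$ starlike'' clause of the lemma, rather than the ``$h$ convex'' clause, is the operative one is also the paper's point of view.
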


We remark that the special case when $\alpha=1$ reduces to Theorem \ref{Thm:ODL}
(see the paragraph right after Lemma \ref{lem:2} below.
Also, the case when $c=1$ is already proved by Mocanu ~\cite{Mocanu86A} even
under the weaker assumption that $0<\alpha\le 2$
(see Remark \ref{rem:Mocanu}).
Since the shape of $R_{\alpha,c,n}(\mathbb{D})$ is not very clear, 
we will deduce more concrete
results as corollaries of Theorem \ref{thm:main} in Section 3.
This is our principal aim in the present note.

\section{Preliminaries}

We first recall the notion of subordination.
A function $f\in\mathcal{H}$ is said to be {\it subordinate} to $F\in\mathcal{H}$
if there exists a function $\omega\in\mathcal{H}[0,1]$ such that $|\omega|<1$
on $\mathbb{D}$ and that $f=F\circ\omega.$
We write $f\prec F$ or $f(z)\prec F(z)$ for subordination.
When $F$ is univalent, $f\prec F$ precisely if $f(0)=F(0)$ and if
$f(\mathbb{D})\subset F(\mathbb{D}).$

Miller and Mocanu ~\cite[Theorem 5]{MM97BB} (see also ~\cite[Theorem 3.2h]{MM:ds})
proved the following general result, from which we will deduce Theorem \ref{thm:main}
in the next section.

\begin{lemma}[Miller and Mocanu]\label{lem:MM}
Let $\mu,\nu\in \mathbb{C}$ with $\mu\neq 0$ and $n$ be a positive integer.
Let $q_0\in\mathcal{H}[c,1]$ be univalent and assume that
$\mu q_0(z)+\nu\neq 0$ for $z\in\mathbb{D}$ and $\Re (\mu c+\nu)>0.$
Set $Q(z)=zq_0'(z)/(\mu q_0(z)+\nu),$ and
\begin{equation}\label{eq:h}
h(z)=q_0(z)+nQ(z)=q_0(z)+\frac{nzq_0'(z)}{\mu q_0(z)+\nu}.
\end{equation}
Suppose further that
\begin{enumerate}
\item[(a)]
$\Re[zh'(z)/Q(z)]=\Re[h'(z)(\mu q_0(z)+\nu)/q_0'(z)]>0,$ and
\item[(b)]
either $h$ is convex or $Q$ is starlike.
\end{enumerate}
If $p\in\mathcal{H}[c,n]$ satisfies the subordination relation
\begin{equation}
q(z)+\frac{zq'(z)}{\mu q(z)+\nu}\prec h(z),
\end{equation}
then $q\prec q_0$, and $q_0$ is the best dominant.
An extremal function is given by $q(z)=q_0(z^n).$
\end{lemma}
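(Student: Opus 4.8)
The plan is to argue by contradiction using the \emph{fundamental lemma of Miller and Mocanu on subordination}: if $q\in\mathcal{H}[c,n]$ with $q(0)=q_0(0)=c$ is \emph{not} subordinate to the univalent function $q_0$, then there exist a point $z_0\in\mathbb{D}$, a boundary point $\zeta_0\in\partial\mathbb{D}$, and an integer $m\ge n$ such that $q(z_0)=q_0(\zeta_0)$ and $z_0q'(z_0)=m\zeta_0 q_0'(\zeta_0)$. To legitimize the use of boundary values of $q_0$ I would first replace $q_0$ by its dilation $z\mapsto q_0(\rho z)$, prove the subordination for each $\rho<1$, and then let $\rho\to1$; this is the standard device for dispensing with any boundary-regularity hypothesis on $q_0$.

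Assuming the conclusion fails, I would feed the two relations above into the left-hand side $\Psi(z)=q(z)+zq'(z)/(\mu q(z)+\nu)$. Evaluating at $z_0$ and substituting gives
$$
\Psi(z_0)=q_0(\zeta_0)+\frac{m\zeta_0 q_0'(\zeta_0)}{\mu q_0(\zeta_0)+\nu}
=h(\zeta_0)+(m-n)Q(\zeta_0),
$$
because $h(\zeta_0)=q_0(\zeta_0)+nQ(\zeta_0)$ and $Q(\zeta)=\zeta q_0'(\zeta)/(\mu q_0(\zeta)+\nu)$. Since $m-n\ge0$, the point $\Psi(z_0)$ is obtained from the boundary point $h(\zeta_0)\in\partial h(\mathbb{D})$ by a nonnegative displacement in the direction $Q(\zeta_0)$. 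The whole argument then reduces to the geometric claim that $h(\zeta_0)+tQ(\zeta_0)\notin h(\mathbb{D})$ for all $t\ge0$, which contradicts $\Psi\prec h$, i.e. $\Psi(\mathbb{D})\subset h(\mathbb{D})$, and thereby forces $q\prec q_0$.

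Establishing that geometric claim is where conditions (a) and (b) enter, and it is the step I expect to be the crux. When $h$ is convex, $h(\mathbb{D})$ lies on one side of its supporting line at $h(\zeta_0)$, whose outward normal direction is $\zeta_0 h'(\zeta_0)$; hence $h(\zeta_0)+tQ(\zeta_0)$ stays outside $h(\mathbb{D})$ as soon as $\Re[Q(\zeta_0)\,\overline{\zeta_0 h'(\zeta_0)}]\ge0$. Using $z/Q(z)=(\mu q_0(z)+\nu)/q_0'(z)$ one rewrites this quantity as $|Q(\zeta_0)|^2\,\Re[\zeta_0 h'(\zeta_0)/Q(\zeta_0)]$, which is nonnegative by passing condition (a) to the boundary. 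When instead $Q$ is starlike, condition (a) says precisely that $h$ is close-to-convex with respect to $Q$; by Kaplan's description of close-to-convex maps the complement of $h(\mathbb{D})$ is a union of non-crossing rays, and the same computation identifies $Q(\zeta_0)$ as an admissible outward ray direction, again placing $h(\zeta_0)+tQ(\zeta_0)$ in the complement. The boundary case $m=n$ is immediate, since then $\Psi(z_0)=h(\zeta_0)\in\partial h(\mathbb{D})$ already contradicts the univalence of $h$.

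Finally, to see that $q_0$ is the best dominant I would exhibit the extremal function $q(z)=q_0(z^n)\in\mathcal{H}[c,n]$. A direct computation using $zq'(z)=nz^n q_0'(z^n)$ yields $\Psi(z)=h(z^n)\prec h(z)$, so this $q$ satisfies the hypothesis; since $q(\mathbb{D})=q_0(\mathbb{D})$, any dominant of the family must contain $q_0(\mathbb{D})$, which forces $q_0\prec\tilde q$ for every dominant $\tilde q$. The principal obstacle is the close-to-convex sub-case of the geometric claim: unlike the convex case there is no single supporting line, so one must invoke Kaplan's ray characterization and check that the direction $Q(\zeta_0)$ singled out by the fundamental lemma matches the admissible ray direction there — which is exactly the content packaged into the two equivalent forms of condition (a).
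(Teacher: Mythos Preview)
The paper does not prove this lemma at all: it is quoted verbatim as a known result of Miller and Mocanu (\cite[Theorem~5]{MM97BB}, \cite[Theorem~3.2h]{MM:ds}) and used as a black box to deduce Theorem~\ref{thm:main}. So there is no ``paper's own proof'' to compare your proposal against.

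That said, your outline is the standard Miller--Mocanu argument from those references: contradict $q\prec q_0$ via the Jack--Miller--Mocanu touching lemma, compute $\Psi(z_0)=h(\zeta_0)+(m-n)Q(\zeta_0)$, and then show this point lies outside $h(\mathbb{D})$ using (a) and (b). The convex sub-case and the extremal function $q(z)=q_0(z^n)$ are handled correctly. The one place that is slightly loose is your appeal to Kaplan's ray description in the starlike-$Q$ sub-case: Kaplan tells you the complement of $h(\mathbb{D})$ is swept by non-crossing half-lines, but does not by itself identify the direction of the ray through $h(\zeta_0)$ as $Q(\zeta_0)$. In Miller--Mocanu's treatment this exclusion $h(\zeta_0)+tQ(\zeta_0)\notin h(\mathbb{D})$ for $t\ge0$ is isolated as a separate geometric lemma, proved directly from the starlikeness of $Q$ and condition~(a) rather than via Kaplan; you would want to either cite that lemma or supply its short argument to make the step airtight.
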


In the investigation of the generalized open door function $R_{\alpha,c,n},$
we will need to study the positive solution to the equation
\begin{equation}\label{eq:eq}
x^2+Ax^{1+\alpha}-1=0,
\end{equation}
where $A>0$ and $0<\alpha\le1$ are constants.
Let $F(x)=x^2+Ax^{1+\alpha}-1.$
Then $F(x)$ is increasing in $x>0$ and $F(0)=-1<0,~F(+\infty)=+\infty.$
Therefore, there is a unique positive solution $x=\xi(A,\alpha)$ to the equation.
We have the following estimates for the solution.

\begin{lemma}\label{lem:sol}
Let $0<\alpha\le1$ and $A>0.$
The positive solution $x=\xi(A,\alpha)$ to equation \eqref{eq:eq} satisfies
the inequalities
$$
(1+A)^{-1/(1+\alpha)}\le\xi(A,\alpha)\le (1+A)^{-1/2}~(<1).
$$
Here, both inequalities are strict when $0<\alpha<1.$
\end{lemma}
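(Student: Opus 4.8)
The plan is to analyze the function $F(x)=x^2+Ax^{1+\alpha}-1$ directly at the two candidate endpoints and exploit its strict monotonicity on $(0,\infty)$. Since $F$ is strictly increasing there (being a sum of strictly increasing functions plus a constant), the inequality $\xi(A,\alpha)\le(1+A)^{-1/2}$ is equivalent to $F\big((1+A)^{-1/2}\big)\ge0$, and $\xi(A,\alpha)\ge(1+A)^{-1/(1+\alpha)}$ is equivalent to $F\big((1+A)^{-1/(1+\alpha)}\big)\le0$. So the whole lemma reduces to two elementary inequalities, and the strictness claim for $0<\alpha<1$ will follow if those two inequalities are strict in that range.

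For the upper bound, substitute $x_0=(1+A)^{-1/2}$. Then $x_0^2=\frac1{1+A}$ and $x_0^{1+\alpha}=(1+A)^{-(1+\alpha)/2}$, so
$$
F(x_0)=\frac1{1+A}+A(1+A)^{-(1+\alpha)/2}-1
=A\Big[(1+A)^{-(1+\alpha)/2}-\frac1{1+A}\Big].
$$
Since $A>0$, the sign of $F(x_0)$ is that of $(1+A)^{-(1+\alpha)/2}-(1+A)^{-1}$, i.e.\ of $(1+A)^{-(1+\alpha)/2}-(1+A)^{-1}$; as $1+A>1$, this is $\ge0$ exactly because the exponent satisfies $(1+\alpha)/2\le1$, with equality iff $\alpha=1$. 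Hence $F(x_0)\ge0$, strictly when $\alpha<1$, giving $\xi(A,\alpha)\le x_0$ with strict inequality for $\alpha<1$. The bound $x_0<1$ is immediate from $1+A>1$.

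For the lower bound, substitute $x_1=(1+A)^{-1/(1+\alpha)}$, so that $x_1^{1+\alpha}=(1+A)^{-1}$ and $Ax_1^{1+\alpha}=A/(1+A)$. Then
$$
F(x_1)=x_1^2+\frac{A}{1+A}-1=x_1^2-\frac1{1+A}
=(1+A)^{-2/(1+\alpha)}-(1+A)^{-1}.
$$
This is $\le0$ precisely because $2/(1+\alpha)\ge1$ (using again $1+A>1$), with equality iff $\alpha=1$. Hence $\xi(A,\alpha)\ge x_1$, strictly for $\alpha<1$. There is no real obstacle here; the only point requiring a moment's care is keeping the direction of each monotonicity-vs.-exponent comparison straight, since the base $1+A$ exceeds $1$ and raising it to a smaller (resp.\ larger) exponent decreases (resp.\ increases) the value. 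Combining the two displays finishes the proof.
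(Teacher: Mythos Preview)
Your proof is correct and follows essentially the same route as the paper: evaluate $F$ at the two candidate endpoints, compare the resulting powers of $1+A$ via the exponent inequalities $(1+\alpha)/2\le 1$ and $2/(1+\alpha)\ge 1$, and conclude by the strict monotonicity of $F$. The only cosmetic difference is that you factor out $A$ in the upper-bound computation whereas the paper compares directly to $\frac1{1+A}+\frac{A}{1+A}-1=0$.
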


\begin{proof}
Set $\xi=\xi(A,\alpha).$
Since the above $F(x)$ is increasing in $x>0,$
the inequalities $F(x_1)\le 0=F(\xi)\le F(x_2)$
imply $x_1\le \xi\le x_2$ for positive numbers $x_1, x_2$
and the inequalities are strict when $x_1<\xi<x_2.$
Keeping this in mind, we now show the assertion.
First we put $x_2=(1+A)^{-1/2}$ and observe
$$
F(x_2)=\frac1{1+A}+\frac{A}{(1+A)^{(1+\alpha)/2}}-1
\ge\frac1{1+A}+\frac{A}{1+A}-1=0,
$$
which implies the right-hand inequality in the assertion.

Next put $x_1=(1+A)^{-1/(1+\alpha)}.$
Then
$$
F(x_1)=\frac1{(1+A)^{2/(1+\alpha)}}+\frac{A}{1+A}-1
\le\frac1{1+A}+\frac{A}{1+A}-1=0,
$$
which implies the left-hand inequality.
We note also that $F(x_1)<0<F(x_2)$ when $\alpha<1.$
The proof is now complete.
\end{proof}

\section{Proof and corollaries}

Theorem \ref{thm:main} can be rephrased in the following.

\begin{theorem}
Let $c$ be a complex number with $\Re c>0$
and $\alpha$ be a real number with $0<\alpha\le 1.$
Then the function
$$
R_{\alpha,c,n}(z)=g_c(z)^\alpha+\frac{n\alpha zg_c'(z)}{g_c(z)}
$$
is univalent on $|z|<1.$
If a function $q\in\mathcal{H}[c^\alpha,n]$ satisfies the subordination condition
$$
q(z)+\frac{zq'(z)}{q(z)}\prec R_{\alpha,c,n}(z)
$$
on $\mathbb{D},$ then $q(z)\prec g_c(z)^\alpha$ on $\mathbb{D}.$
The function $g_c^\alpha$ is the best dominant.
\end{theorem}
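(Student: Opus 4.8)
The plan is to apply Lemma~\ref{lem:MM} with the specific choices
$q_0=g_c^\alpha$, $\mu=1$, $\nu=0$, and $c$ in the lemma replaced by
$c^\alpha=q_0(0)$; then $\mu q_0(z)+\nu=q_0(z)$ never vanishes on $\mathbb{D}$
(since $g_c$ maps into the right half-plane and $\alpha>0$), and
$\Re(\mu c^\alpha+\nu)=\Re c^\alpha>0$ because $|\arg c^\alpha|=\alpha|\arg c|<\pi/2$.
With these choices the auxiliary functions become $Q(z)=zq_0'(z)/q_0(z)=\alpha zg_c'(z)/g_c(z)$
and $h(z)=q_0(z)+nQ(z)=R_{\alpha,c,n}(z)$, so the subordination
$q+zq'/q\prec R_{\alpha,c,n}$ is exactly the hypothesis of the lemma and the
conclusion $q\prec g_c^\alpha$, together with ``$g_c^\alpha$ is the best
dominant'', is exactly the conclusion of the lemma. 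Since $g_c^\alpha$ is univalent onto the
sector $|\arg w|<\pi\alpha/2$, the subordination $q\prec g_c^\alpha$ gives both the univalence
of $R_{\alpha,c,n}$ (once verified) and the desired estimate $|\arg q|<\pi\alpha/2$.

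Thus everything reduces to checking the three structural hypotheses of
Lemma~\ref{lem:MM}: that $q_0=g_c^\alpha\in\mathcal{H}[c^\alpha,1]$ is univalent,
condition~(a), and condition~(b). Univalence of $g_c^\alpha$ is immediate
since $g_c$ is a M\"obius map onto the right half-plane and $w\mapsto w^\alpha$
is injective on that half-plane for $0<\alpha\le1$. For the remaining two
I would compute the relevant logarithmic derivatives. Writing $L=\log g_c$
with $|\Im L|<\pi/2$, one has $zg_c'/g_c=zL'$, and a direct computation gives
$zg_c'(z)/g_c(z)=2(\Re c)z/[(1-z)(c+\bar cz)]$, which is (a rotation/scaling of)
a starlike function of order $\tfrac12$ — in particular $Q=\alpha\, zg_c'/g_c$
is starlike, so condition~(b) holds. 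It then remains to verify condition~(a),
namely $\Re[zh'(z)/Q(z)]>0$ on $\mathbb{D}$. Using $h=q_0+nQ$ we get
$zh'/Q=zq_0'/Q+n\,zQ'/Q=(\mu q_0+\nu)+n\,zQ'/Q=g_c^\alpha+n\,zQ'/Q$, and since
$Q$ is starlike of order $\tfrac12$ we have $\Re[zQ'(z)/Q(z)]>\tfrac12>0$; combined
with $\Re g_c(z)^\alpha>0$ this yields $\Re[zh'/Q]>0$. Hence both (a) and (b) are
satisfied and Lemma~\ref{lem:MM} applies.

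The one genuinely nontrivial point, and the main obstacle, is the univalence of
$R_{\alpha,c,n}=h$ on $\mathbb{D}$, which is \emph{not} part of the conclusion of
Lemma~\ref{lem:MM}. Condition~(b) only gives that $h$ is convex \emph{or} $Q$ is
starlike, and here it is the latter that holds; convexity of $h$ would
immediately give univalence but is generally false. Instead I would argue that
condition~(a), $\Re[zh'(z)/Q(z)]>0$ with $Q$ starlike, forces $h$ to be
close-to-convex with respect to the starlike function $Q$ (after the standard
normalization), hence univalent by the Kaplan/Noshiro--Warschawski criterion;
more precisely, $h'=\big(zh'/Q\big)\cdot\big(Q/z\big)$ and one checks the
close-to-convexity condition $\Re[h'(z)/\big(Q(z)/z\big)]>0$, which is exactly
(a) rewritten. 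This is where the constraint $0<\alpha\le1$ is really used — it is
what makes $w\mapsto w^\alpha$ behave well on the right half-plane and keeps
$zQ'/Q$ in the right half-plane. Once univalence of $R_{\alpha,c,n}$ is in hand,
the rest is a direct invocation of the lemma as described above, and the
``best dominant'' and extremal-function statements are inherited verbatim from
Lemma~\ref{lem:MM}.
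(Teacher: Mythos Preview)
Your argument is correct and follows essentially the same route as the paper's proof: apply Lemma~\ref{lem:MM} with $q_0=g_c^\alpha$, $\mu=1$, $\nu=0$, verify that $Q=\alpha\, zg_c'/g_c$ is starlike and that $\Re[zh'/Q]=\Re[g_c^\alpha+n\,zQ'/Q]>0$, and then obtain univalence of $h=R_{\alpha,c,n}$ from close-to-convexity with respect to $Q$.

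One small correction: your claim that $Q$ is starlike \emph{of order $\tfrac12$} is false in general. The paper computes
\[
\frac{zQ'(z)}{Q(z)}=\frac12\left[\frac{c-\bar c z}{c+\bar c z}+\frac{1+z}{1-z}\right],
\]
and each bracketed term ranges over the whole right half-plane, so the real part can be made arbitrarily close to $0$ (for $c=1$, take $z\to i$ to get $\tfrac12(-i+i)=0$). Only $\Re[zQ'/Q]>0$ is available, but that is all you actually use, so the proof stands once the ``order $\tfrac12$'' claim is dropped. Also, your remark that the restriction $0<\alpha\le1$ ``keeps $zQ'/Q$ in the right half-plane'' is off: $zQ'/Q$ is independent of $\alpha$; the constraint is used only to make $w\mapsto w^\alpha$ injective on the right half-plane (univalence of $q_0$) and to ensure $\Re g_c^\alpha>0$.
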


\begin{proof}
We first show that the function $Q(z)=\alpha zg_c'(z)/g_c(z)$ is starlike.
Indeed, we compute
\begin{equation*}
\frac{zQ'(z)}{Q(z)}=1-\frac{\bar{c}z}{c+\bar{c}z}+\frac{z}{1-z}
=\frac12\left[\frac{c-\bar cz}{c+\bar cz}+\frac{1+z}{1-z}\right].
\end{equation*}
Thus we can see that $\Re[zQ'(z)/Q(z)]>0$ on $|z|<1.$
Next we check condition (a) in Lemma \ref{lem:MM} for the functions
$q_0=g_c^\alpha, h=R_{\alpha,c,n}$ with the choice $\mu=1,\nu=0.$
We have the expression
$$
\frac{zh'(z)}{Q(z)}=q_c(z)^\alpha+n\frac{zQ'(z)}{Q(z)}.
$$
Since both terms in the right-hand side have positive real part,
we obtain (a).
We now apply Lemma \ref{lem:MM} to obtain the required assertion up to
univalence of $h=R_{\alpha,c,n}.$
In order to show the univalence, we have only to note that the condition (a)
implies that $h$ is close-to-convex, since $Q$ is starlike.
As is well known, a close-to-convex function is univalent (see \cite{Duren:univ}),
the proof has been finished.
\end{proof}

We now investigate the shape of the image domain $R_{\alpha,c,n}(\mathbb{D})$
of the generalized open door function $R_{\alpha,c,n}$ given in
Theorem \ref{thm:main}.
Let $z=e^{i\theta}$ and $c=r e^{it}$ for $\theta\in\mathbb R, r>0$ and $-\pi/2<t<\pi/2.$
Then we have
\begin{equation*}
\begin{aligned}
R_{\alpha,c,n}(e^{i\theta})
&=\left(\frac{re^{it}+re^{-it}e^{i\theta}}{1-e^{i\theta}}\right)^{\alpha}
+\frac{2n\alpha e^{i\theta}\cos t}{(1-e^{i\theta})(e^{it}+e^{-it}e^{i\theta})}\\
&=\left(\frac{r\cos{(t-\theta/2)}}{\sin{(\theta/2)}}i\right)^{\alpha}
+\frac i2\cdot
\frac{n\alpha\cos{t}}{\sin{(\theta/2)\cos{(t-\theta/2)}}}\\
&=r^\alpha e^{\pi\alpha i/2}
\left(\cos{t}\cot{(\theta/2)}+\sin{t}\right)^{\alpha}
+\frac i2\cdot
\frac{n\alpha(1+\cot^{2}{(\theta/2)})\cos{t}}{\cos{t}\cot{(\theta/2)}+\sin{t}}.
\end{aligned}
\end{equation*}
Let $x=\cot{(\theta/2)}\cos{t}+\sin{t}.$
When $x>0,$ we write $R_{\alpha,c,n}(e^{i\theta})=u_+(x)+iv_+(x)$ and
get the expressions
\begin{equation*}
\left\{
\begin{aligned}
u_+(x)&=a(rx)^{\alpha},\\
v_+(x)&=b(rx)^{\alpha}+\frac{n\alpha}{2\cos{t}}\left(x-2\sin t+\frac1x\right),
\end{aligned}
\right.
\end{equation*}
where 
$$
a=\cos\frac{\beta\pi}{2}
\aand
b=\sin\frac{\beta\pi}{2}.
$$
Taking the derivative, we get
\begin{equation*}
v_+'(x)=\frac{n\alpha}{2x^2\cos{t}}
\left[x^2+\frac{2br^{\alpha}\cos{t}}{n}x^{\alpha+1}-1\right].
\end{equation*}
Hence, the minimum of $v_+(x)$ is attained at $x=\xi(A,\alpha),$
where $A=2br^\alpha n\inv\cos t.$
By using the relation \eqref{eq:eq}, we obtain
\begin{align*}
\min_{0<x}v_+(x)&=v_+(\xi)
=\frac{n}{2\cos t}\left(A\xi^\alpha+\alpha\xi+\frac{\alpha}{\xi}\right)
-n\alpha\tan t \\
&=\frac{n}{2\cos t}\left((\alpha-1)\xi+\frac{\alpha+1}{\xi}\right)-n\alpha\tan t
=U(\xi),
\end{align*}
where 
$$
U(x)=\frac{n}{2\cos t}\left((\alpha-1)x+\frac{\alpha+1}{x}\right)-n\alpha\tan t.
$$
Since the function $U(x)$ is decreasing in $0<x<1,$
Lemma \ref{lem:sol} yields the inequality
\begin{align*}
v_+(\xi)&=U(\xi)\ge
U((1+A)^{-1/2}) \\
&=\frac{n}{2\cos t}\left(\frac{\alpha-1}{\sqrt{1+A}}+(\alpha+1)\sqrt{1+A}\right)
-n\alpha\tan t.
\end{align*}
We remark here that
$$
U((1+A)^{-1/2})>U(1)=\frac{n\alpha(1-\sin t)}{\cos t}>0;
$$
namely, $v_+(x)>0$ for $x>0.$
When $x<0,$ letting $y=-x=-\cot{(\theta/2)}\cos{t}-\sin{t},$
we write $h(e^{i\theta})=u_-(y)+iv_-(y).$
Then, with the same $a$ and $b$ as above,
\begin{equation*}
\left\{
\begin{aligned}
u_-(y)&=a(ry)^{\alpha},\\
v_-(y)&=-b(ry)^{\alpha}-\frac{n\alpha}{2\cos{t}}\left(y+2\sin t+\frac1y\right),
\end{aligned}
\right.
\end{equation*}
We observe here that $u_+=u_->0$ and, in particular, we obtain the following.

\begin{lemma}\label{lem:1}
The left half-plane $\Omega_1=\{w: \Re w<0\}$
is contained in $R_{\alpha,c,n}(\mathbb{D}).$
\end{lemma}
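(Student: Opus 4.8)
The plan is to derive the inclusion purely from the boundary data already computed above, by a standard connectedness argument. Write $R:=R_{\alpha,c,n}$ and $D:=R(\mathbb{D})$. Since $R$ is univalent (by the previous theorem), $D$ is a domain. The first thing to record is that $R$ extends to a continuous map $\overline{\mathbb{D}}\to\widehat{\mathbb{C}}$ sending the two points $z=1$ and $z=-c/\bar c$ to $\infty$: indeed, away from these points both the power term $g_c^\alpha$ (note that $g_c$ maps $\overline{\mathbb{D}}\setminus\{1,-c/\bar c\}$ into the closed right half-plane with $0$ and $\infty$ deleted, where the chosen branch of the $\alpha$-th power is continuous) and the rational term of $R$ are finite and continuous. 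Consequently $\partial D\subseteq R(\partial\mathbb{D})$, and the formulas established just above, $\Re R(e^{i\theta})=u_\pm(\,\cdot\,)=a(r|x|)^\alpha$ with the constant $a\ge0$ from the preceding computation, give $\partial D\subseteq\{\Re w\ge0\}\cup\{\infty\}$; in particular $\Omega_1\cap\partial D=\emptyset$.

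Now $\Omega_1=\{\Re w<0\}$ is connected and disjoint from $\partial D$, so the decomposition $\widehat{\mathbb{C}}\setminus\partial D=D\sqcup(\widehat{\mathbb{C}}\setminus\overline D)$ into disjoint open sets forces either $\Omega_1\subseteq D$ or $\Omega_1\cap D=\emptyset$. To rule out the second alternative it is enough to exhibit a single point of $D$ with negative real part, and such a point shows up along the radius $z=-s\,c/\bar c$ $(0<s<1)$ terminating at the pole $z_0=-c/\bar c$: there $c+\bar cz=c(1-s)$ and $1-z\to 1+c/\bar c\neq0$ as $s\to1^-$, so $g_c(z)\to0$ (hence $g_c(z)^\alpha\to0$), whereas the rational term $2n\alpha(\Re c)z/[(1-z)(c+\bar cz)]$ is asymptotic to $-n\alpha s/(1-s)\to-\infty$. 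Thus $\Re R(z)\to-\infty$, so $D$ meets $\Omega_1$, and therefore $\Omega_1\subseteq D$, which is the assertion.

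The step that needs the most care is the passage $\partial D\subseteq R(\partial\mathbb{D})\subseteq\{\Re w\ge0\}\cup\{\infty\}$: one must make sure that the two poles of $R$ on the unit circle do not generate extra boundary points of $D$ sitting in the open left half-plane. This is precisely what is handled by regarding $R$ as a continuous map into the Riemann sphere, so that each pole merely contributes the single boundary value $\infty$, after which $\partial D=\overline D\setminus D\subseteq R(\overline{\mathbb{D}})\setminus R(\mathbb{D})\subseteq R(\partial\mathbb{D})$ is routine. Everything else is either the elementary sign remark $a\ge0$ (already in the text) or the short explicit asymptotics along the radius through $-c/\bar c$; no new estimates beyond those on the preceding pages are required.
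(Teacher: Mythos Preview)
Your proof is correct and follows essentially the same approach as the paper: the paper simply records that the boundary real parts $u_+=u_-=a(r|x|)^\alpha$ are nonnegative and immediately states the lemma, leaving the topological justification implicit. Your connectedness argument, together with the explicit check that $R(z)$ hits the left half-plane along the radius through $-c/\bar c$, merely supplies the details the paper omits.
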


We now look at $v_-(y).$
Since
$$
v_-'(y)=-\frac{n\alpha}{2y^2\cos{t}}
\left[y^2+\frac{2br^{\alpha}\cos{t}}{n}y^{\alpha+1}-1\right],
$$
in the same way as above, we obtain
\begin{align*}
\max_{0<y}v_-(y)&=v_-(\xi)
=-\frac{n}{2\cos t}\left((\alpha-1)\xi+\frac{\alpha+1}{\xi}\right)
-n\alpha\tan t \\
&\le-\frac{n}{2\cos t}\left(\frac{\alpha-1}{\sqrt{1+A}}+(\alpha+1)\sqrt{1+A}\right)
-n\alpha\tan t,
\end{align*}
where $\xi=\xi(A,\alpha)$ and $A=2br^\alpha n\inv\cos t.$
Note also that $v_-(y)<0$ for $y>0.$

Since the horizontal parallel strip $v_-(\xi)<\Im w< v_+(\xi)$ is contained
in the image domain $R_{\alpha,c,n}(\mathbb{D})$ of the generalized open door function,
we obtain the following.

\begin{lemma}\label{lem:2}
The parallel strip $\Omega_2$ described by
$$
\left|\Im w+n\alpha\tan t\right|
<\frac{n}{2\cos t}\left(\frac{\alpha-1}{\sqrt{1+A}}+(\alpha+1)\sqrt{1+A}\right)
$$
is contained in $R_{\alpha,c,n}(\mathbb{D}).$
Here, $t=\arg c\in(-\frac\pi2,\frac\pi2)$
and $A=\frac2n|c|^{\alpha}\sin\frac{\pi\alpha}2\cos t.$
\end{lemma}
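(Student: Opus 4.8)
The plan is to read off Lemma \ref{lem:2} directly from the computations already assembled just above it, so the proof is essentially a matter of identifying the horizontal strip that is guaranteed to lie in $R_{\alpha,c,n}(\mathbb{D})$ and rewriting the constants in terms of $c$ and $\alpha$. First I would recall that the boundary curve $R_{\alpha,c,n}(\partial\mathbb{D})$ was split into two arcs according to the sign of $x=\cot(\theta/2)\cos t+\sin t$, parametrized by $u_{\pm},v_{\pm}$, and that on both arcs the real part $u_{\pm}(x)=a(r|x|)^\alpha$ is nonnegative (indeed this gave Lemma \ref{lem:1}). The key point is that the image of the generalized open door function, being the inside of a Jordan curve through $\infty$ with all of the left half-plane included, contains every horizontal line that misses the boundary curve entirely; since the boundary curve is confined to the closed right half-plane $\Re w\ge 0$ on the relevant pieces, any horizontal strip on which we merely control the $\Im w$-coordinate of the two boundary arcs will be swept out.

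Next I would pin down the vertical extent of that strip. On the $x>0$ arc the imaginary part $v_+(x)$ attains its minimum at $x=\xi(A,\alpha)$ with $A=2br^\alpha n^{-1}\cos t$, and the displayed computation gives $v_+(\xi)=U(\xi)$ where $U(x)=\frac{n}{2\cos t}\big((\alpha-1)x+\frac{\alpha+1}{x}\big)-n\alpha\tan t$; since $U$ is decreasing on $(0,1)$ and Lemma \ref{lem:sol} gives $\xi\le (1+A)^{-1/2}<1$, we get the lower bound $v_+(\xi)\ge U((1+A)^{-1/2})$. Symmetrically, on the $y>0$ arc the imaginary part $v_-(y)$ has its maximum at $y=\xi(A,\alpha)$ (same $A$, same $\xi$) with $v_-(\xi)=-\big[\frac{n}{2\cos t}((\alpha-1)\xi+\frac{\alpha+1}{\xi})\big]-n\alpha\tan t$, and the same monotonicity argument yields $v_-(\xi)\le -U((1+A)^{-1/2})-2n\alpha\tan t$; writing $U((1+A)^{-1/2})=\frac{n}{2\cos t}\big(\frac{\alpha-1}{\sqrt{1+A}}+(\alpha+1)\sqrt{1+A}\big)-n\alpha\tan t$, the two inequalities combine to say that the open horizontal strip $v_-(\xi)<\Im w<v_+(\xi)$ contains the strip $\big|\Im w+n\alpha\tan t\big|<\frac{n}{2\cos t}\big(\frac{\alpha-1}{\sqrt{1+A}}+(\alpha+1)\sqrt{1+A}\big)$. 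Finally I would substitute $r=|c|$, $t=\arg c$, $b=\sin(\pi\alpha/2)$ so that $A=\frac{2}{n}|c|^\alpha\sin\frac{\pi\alpha}{2}\cos t$, matching the statement verbatim.

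The one genuine point requiring care—and the step I expect to be the main obstacle—is the topological claim that a horizontal strip on which the boundary curve has controlled imaginary parts actually lies inside $R_{\alpha,c,n}(\mathbb{D})$. Because $R_{\alpha,c,n}$ is univalent (Theorem \ref{thm:main}) and $h(e^{i\theta})\to\infty$ as $\theta\to 0$, its image is the region bounded by a single Jordan arc in $\widehat{\mathbb{C}}$, and one must argue that this region is precisely the component containing the left half-plane, so that a point $w$ with $\Re w$ arbitrary but $v_-(\xi)<\Im w<v_+(\xi)$ is joined to that half-plane by a horizontal ray meeting no boundary point. Concretely, fixing such a $w$ and moving left along the horizontal line through $w$, we never cross the boundary curve because on each boundary arc the imaginary coordinate has been bounded away from the interval $(v_-(\xi),v_+(\xi))$ exactly on the range of $x$ (resp.\ $y$) where $\Re w$ could be small, while for the large values of $\Re w$ the curve escapes to $\infty$; hence the ray reaches $\Re w<0$, which already lies in the image by Lemma \ref{lem:1}. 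Making this ``sweeping'' argument precise is the only non-routine ingredient; everything else is bookkeeping with the formulas for $v_\pm$ and an application of Lemmas \ref{lem:sol} and \ref{lem:1}.
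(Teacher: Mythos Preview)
Your proposal is correct and follows essentially the same approach as the paper: parametrize the boundary curve, locate $\min v_+$ and $\max v_-$ at $\xi(A,\alpha)$, bound these via the monotonicity of $U$ together with Lemma~\ref{lem:sol}, and conclude that the explicit strip lies in $R_{\alpha,c,n}(\mathbb{D})$. Your explicit discussion of the topological step (why a horizontal line at height in $(v_-(\xi),v_+(\xi))$ actually lies inside the image) is more careful than the paper, which simply asserts that the strip $v_-(\xi)<\Im w<v_+(\xi)$ is contained in the image and states the lemma.
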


When $\alpha=1,$ we have $u_\pm=0,$ that is, the boundary is contained in
the imaginary axis.
Since $\xi(A,1)=(1+A)^{-1/2}$ by Lemma \ref{lem:sol},
the above computations tell us
$\min v_+=(n/\cos t)(\sqrt{1+A}-\sin t)=C_n(\bar c).$
Similarly, we have $\max v_-=-(n/\cos t)(\sqrt{1+A}+\sin t)=-C_n(c).$
Therefore, we have $R_{1,c,n}(\mathbb{D})=V(-C_n(c), C_n(\bar c)).$
Note that the open door function then takes the following form
\begin{align*}
R_{1,c,n}(z)&=\frac{c+\bar{c}z}{1-z}
+\frac{2n(\Re c)z}{(1-z)(c+\bar{c}z)} \\
&=\frac{2\Re c+n}{1+cz/\bar c}-\frac n{1-z}-\bar c,
\end{align*}
which is the same as given by Kuroki and Owa \cite[(2.2)]{KO14}.
In this way, we see that Theorem \ref{thm:main} contains Theorem \ref{Thm:ODL}
as a special case.

\begin{remark}
In ~\cite{KO14}, they proposed another open door function of the form
\begin{equation*}
R(z)=\frac{2n|c|}{\Re c}\sqrt{\frac{2\Re c}{n}+1}
\frac{(\zeta-z)(1-\bar\zeta z)}{(1-\bar\zeta z)^2-(\zeta-z)^2}
-\frac{\Im c}{\Re c}i,
\end{equation*}
where
\begin{equation*}
\zeta=1-\frac{2}{\omega},\quad
\omega=\frac{c}{|c|}\sqrt{\frac{2\Re c}{n}+1}+1.
\end{equation*}
It can be checked that $R(z)=R_{1,c,n}(-\omega z/\bar\omega).$
Hence, $R$ is just a rotation of $R_{1,c,n}.$
\end{remark}

We next study the argument of the boundary curve of $R_{\alpha,c,n}(\mathbb{D}).$
We will assume that $0<\alpha<1$ since we have nothing to do when $\alpha=1.$

As we noted above, the boundary is contained in the right half-plane $\Re w>0.$
When $x>0$, we have
\begin{equation*}
\frac{v_+(x)}{u_+(x)}
=\frac{b}{a}+\frac{n\alpha}{2ar^\alpha x^\alpha\cos t}
\left[x+\frac{1}{x}-2\sin{t}\right].
\end{equation*}
We observe now that $v_+(x)/u_+(x)\to+\infty$ as $x\to0+$ or $x\to+\infty.$
We also have
\begin{equation*}
\left(\frac{v_+}{u_+}\right)'(x)
=\frac{n\alpha}{2ar^\alpha x^{\alpha+2}\cos{t}}
\left[(1-\alpha)x^{2}+2\alpha x\sin{t}-(1+\alpha)\right].
\end{equation*}
Therefore, $v_+(x)/u_+(x)$ takes its minimum at $x=\xi,$ where
$$
\xi=\frac{-\alpha\sin t+\sqrt{1-\alpha^2\cos^2t}}{1-\alpha}
$$
is the positive root of the equation $(1-\alpha)x^{2}+2\alpha x\sin{t}-(1+\alpha)=0.$
It is easy to see that $1<\xi$ and that
\begin{align*}
T_+&:=\min_{0<x}\frac{v_+(x)}{u_+(x)}
=\frac{v_+(\xi)}{u_+(\xi)}
=\frac{b}{a}+\frac{n\alpha}{2ar^\alpha \xi^\alpha\cos t}
\left[\xi+\frac{1}{\xi}-2\sin{t}\right] \\
&=\tan\frac{\pi\alpha}2+\frac{n(\xi-\xi\inv)}{2ar^\alpha \xi^\alpha\cos t}.
\end{align*}
When $x=-y<0,$ we have
\begin{equation*}
\frac{v_-(y)}{u_-(y)}
=-\frac{b}{a}-\frac{n\alpha}{2ar^\alpha y^\alpha\cos t}
\left[y+\frac{1}{y}+2\sin{t}\right]
\end{equation*}
and
\begin{equation*}
\left(\frac{v_-}{u_-}\right)'(y)
=\frac{-n\alpha}{2ar^\alpha y^{\alpha+2}\cos{t}}
\left[(1-\alpha)y^{2}-2\alpha y\sin{t}-(1+\alpha)\right].
\end{equation*}
Hence, $v_-(y)/u_-(y)$ takes its maximum at $y=\eta,$ where
$$
\eta=\frac{\alpha\sin t+\sqrt{1-\alpha^2\cos^2t}}{1-\alpha}.
$$
Note that
$$
T_-:=\max_{0<y}\frac{v_-(y)}{u_-(y)}
=\frac{v_-(\eta)}{u_-(\eta)}
=-\tan\frac{\pi\alpha}2-\frac{n(\eta-\eta\inv)}{2ar^\alpha \eta^\alpha\cos t}.
$$
Therefore, the sector $\{w: T_-<\arg w<T_+\}$ is contained in the image $h(\mathbb{D}).$
It is easy to check that $T_-<-\tan(\pi\alpha/2)<\tan(\pi\alpha/2)<T_+.$
In particular $T_-<\arg c^\alpha=\alpha t<T_+.$
We summarize the above observations, together with Theorem \ref{thm:main},
in the following form.

\begin{corollary}\label{cor:sector}
Let $0<\alpha< 1$ and $c=re^{it}$ with $r>0, -\pi/2<t<\pi/2,$ and $n$ be a
positive integer.
If a function $q\in\mathcal{H}[c^\alpha,n]$ satisfies the condition
$$
-\Theta_-<\arg\left(q(z)+\frac{zq'(z)}{q(z)}\right)<\Theta_+
$$
on $|z|<1,$ then $|\arg q|<\pi\alpha/2$ on $\mathbb{D}.$
Here,
$$
\Theta_\pm=\arctan\left[
\tan\frac{\pi\alpha}2+
\frac{n(\xi_\pm-\xi_\pm\inv)}{2r^\alpha \xi_\pm^\alpha\cos(\pi\alpha/2)\cos t}
\right],
$$
and
$$
\xi_\pm=\frac{\mp\alpha\sin t+\sqrt{1-\alpha^2\cos^2t}}{1-\alpha}.
$$
\end{corollary}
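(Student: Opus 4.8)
\emph{Proof proposal.} The plan is to deduce Corollary \ref{cor:sector} from Theorem \ref{thm:main} by recognizing the sector hypothesis on $q+zq'/q$ as a disguised subordination to $R_{\alpha,c,n}$. Throughout, write $S=\{w:-\Theta_-<\arg w<\Theta_+\}$ and $D=R_{\alpha,c,n}(\mathbb D)$. Since $\tan(\pi\alpha/2)>0$ and $\xi_\pm>1$, the arguments of the two arctangents defining $\Theta_\pm$ exceed $\tan(\pi\alpha/2)$, so $\Theta_\pm\in(\pi\alpha/2,\pi/2)$. In particular $S$ is a genuine sector of opening $\Theta_++\Theta_-<\pi$, and it contains the ray $\arg w=\alpha t=\arg c^\alpha$, because $-\Theta_-<-\pi\alpha/2<\alpha t<\pi\alpha/2<\Theta_+$.

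The main step is to establish $S\subseteq D$, and here I would invoke the boundary analysis already carried out above. Every finite point of $\partial D$ is of the form $u_+(x)+iv_+(x)$ with $x>0$ or $u_-(y)+iv_-(y)$ with $y>0$, and it satisfies $\Re w=u_\pm>0$ together with $v_+(x)/u_+(x)\ge T_+=\tan\Theta_+$ in the first case and $v_-(y)/u_-(y)\le T_-=-\tan\Theta_-$ in the second. Since such $w$ lies in the open right half-plane, this forces $\arg w\ge\Theta_+$ respectively $\arg w\le-\Theta_-$, so $\partial D$ meets neither $S$ nor the imaginary axis; and $\infty\in\partial_{\overline{\mathbb C}}D$ is not in $S$ either. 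Now $D$ is a conformal image of $\mathbb D$ by Theorem \ref{thm:main}, hence simply connected, so $\overline{\mathbb C}\setminus D$ is connected and $\overline{\mathbb C}=D\sqcup\partial_{\overline{\mathbb C}}D\sqcup(\overline{\mathbb C}\setminus\overline D)$ displays $D\cup(\overline{\mathbb C}\setminus\overline D)$ as a disjoint union of two open sets. The connected set $S$ therefore lies entirely in one of them; since $c^\alpha=R_{\alpha,c,n}(0)\in D$ and $c^\alpha\in S$, we conclude $S\subseteq D$.

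With $S\subseteq D$ in hand the rest is immediate. Given $q\in\mathcal H[c^\alpha,n]$ with $-\Theta_-<\arg(q+zq'/q)<\Theta_+$ on $\mathbb D$, put $P=q+zq'/q$; the hypothesis presupposes $P$ holomorphic on $\mathbb D$ — equivalently $q$ zero-free, since a zero of $q$ (necessarily at a point $\ne0$, as $q(0)=c^\alpha\ne0$) would give $P$ a pole, impossible with $\arg P$ confined to a sector of opening $<\pi$ — and $P(0)=q(0)=c^\alpha=R_{\alpha,c,n}(0)$. As $R_{\alpha,c,n}$ is univalent and $P(\mathbb D)\subseteq S\subseteq D=R_{\alpha,c,n}(\mathbb D)$, this yields $P\prec R_{\alpha,c,n}$. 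Theorem \ref{thm:main} then gives $q\prec g_c^\alpha$, so $q(\mathbb D)\subseteq g_c^\alpha(\mathbb D)=\{w:|\arg w|<\pi\alpha/2\}$, i.e. $|\arg q|<\pi\alpha/2$ on $\mathbb D$.

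The one genuinely delicate point is the middle step: converting the one-sided estimates for $v_\pm/u_\pm$ along the boundary curve into the honest inclusion $S\subseteq D$. Simple connectivity of $D$ is exactly what makes this work, and some care is needed only because $D$ is unbounded with $\partial D$ escaping to $\infty$. Everything else reduces to bookkeeping — matching the $\xi_\pm$ and $\Theta_\pm$ of the statement with the quantities $\xi$, $\eta$, $T_\pm$ produced in the preceding computation, and checking $\Theta_\pm\in(\pi\alpha/2,\pi/2)$.
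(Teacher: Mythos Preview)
Your proposal is correct and follows essentially the same route as the paper: the boundary parametrisation already computed shows that every finite boundary point of $D=R_{\alpha,c,n}(\mathbb D)$ lies in $\{\arg w\ge\Theta_+\}\cup\{\arg w\le-\Theta_-\}$, whence the sector $S$ sits inside $D$, and Theorem~\ref{thm:main} finishes the job. The only difference is one of explicitness: the paper simply asserts ``Therefore, the sector \dots\ is contained in the image'' and packages the result as the corollary, whereas you spell out the connectedness argument (that $S$ misses $\partial_{\overline{\mathbb C}}D$, hence lies in one of the two open pieces $D$ or $\overline{\mathbb C}\setminus\overline D$, and $c^\alpha\in S\cap D$ decides which). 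Your appeal to simple connectivity of $D$ is harmless but unnecessary here --- the decomposition $\overline{\mathbb C}\setminus\partial D=D\sqcup(\overline{\mathbb C}\setminus\overline D)$ into disjoint open sets holds for any open $D$, and connectedness of $S$ alone forces $S$ into one piece.
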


It is a simple task to check that $x^{1-\alpha}-x^{-1-\alpha}$ is increasing
in $0<x.$
When $\Im c>0,$ we see that $\xi_->\xi_+$ and thus $\Theta_->\Theta_+.$
It might be useful to note the estimates $\xi_-<\sqrt{(1+\alpha)/(1-\alpha)}<\xi_+$ and
$\xi_-<1/\sin t$ for $\Im c>0.$

\begin{remark}\label{rem:Mocanu}
When $c=1$ and $n=1,$ we have $\xi:=\xi_\pm=\sqrt{(1+\alpha)/(1-\alpha)},~
\xi-\xi\inv=2\alpha/\sqrt{1-\alpha^2},$ and thus
\begin{align*}
\Theta_\pm&=\arctan\left[
\tan\frac{\pi\alpha}2+
\frac{\xi-\xi\inv}{2\xi^\alpha\cos\tfrac{\pi\alpha}2}
\right] \\
&=\arctan\left[
\tan\frac{\pi\alpha}2+
\frac{\alpha}{\cos\tfrac{\pi\alpha}2(1-\alpha)^{\frac{1-\alpha}2}(1+\alpha)^{\frac{1+\alpha}2}}
\right] \\
&=\frac{\pi\alpha}2+
\arctan\left[
\frac{\alpha\cos\tfrac{\pi\alpha}2}%
{(1-\alpha)^{\tfrac{1-\alpha}2}(1+\alpha)^{\tfrac{1+\alpha}2}+\alpha\sin\tfrac{\pi\alpha}2}
\right].
\end{align*}
Therefore, the corollary gives a theorem proved by Mocanu \cite{Mocanu89}.
\end{remark}

Since the values $\Theta_+$ and $\Theta_-$ are not given in an explicitly way,
it might be convenient to have a simpler sufficient condition for
$|\arg q|<\pi\alpha/2.$

\begin{corollary}
Let $0<\alpha\le1$ and $c$ with $\Re c>0$ and $n$ be a
positive integer.
If a function $q\in\mathcal{H}[c^\alpha,n]$ satisfies the condition
$$
q(z)+\frac{zq'(z)}{q(z)}\in\Omega,
$$
then $|\arg q|<\pi\alpha/2$ on $\mathbb{D}.$
Here, $\Omega=\Omega_1\cup\Omega_2\cup\Omega_3,$ and $\Omega_1$ and $\Omega_2$
are given in Lemmas \ref{lem:1} and \ref{lem:2}, respectively, and
$\Omega_3=\{w\in\mathbb{C}: |\arg w|<\pi\alpha/2\}.$
\end{corollary}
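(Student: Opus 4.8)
The plan is to reduce everything to Theorem~\ref{thm:main} by exhibiting that the union $\Omega=\Omega_1\cup\Omega_2\cup\Omega_3$ is contained in the image domain $R_{\alpha,c,n}(\mathbb{D})$. Once that containment is established, the corollary is immediate: if $q\in\mathcal{H}[c^\alpha,n]$ satisfies $q(z)+zq'(z)/q(z)\in\Omega\subset R_{\alpha,c,n}(\mathbb{D})$ for all $z\in\mathbb{D}$, then in particular $q(z)+zq'(z)/q(z)\in R_{\alpha,c,n}(\mathbb{D})$, and Theorem~\ref{thm:main} yields $|\arg q|<\pi\alpha/2$ on $\mathbb{D}$. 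So the entire content is the set inclusion.

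The inclusion is handled piece by piece. First, $\Omega_1=\{\Re w<0\}\subset R_{\alpha,c,n}(\mathbb{D})$ is exactly Lemma~\ref{lem:1}. Second, $\Omega_2\subset R_{\alpha,c,n}(\mathbb{D})$ is exactly Lemma~\ref{lem:2}. Third, for $\Omega_3=\{|\arg w|<\pi\alpha/2\}$ I would argue as follows. When $\alpha=1$ the set $\Omega_3$ is just the right half-plane, which is contained in $V(-C_n(c),C_n(\bar c))=R_{1,c,n}(\mathbb{D})$ as noted in the discussion after Lemma~\ref{lem:2}. When $0<\alpha<1$, the computation of the boundary behaviour of $R_{\alpha,c,n}$ on $\partial\mathbb{D}$ carried out before Corollary~\ref{cor:sector} shows that the boundary curve of $R_{\alpha,c,n}(\mathbb{D})$ lies in the closed region $\{\arg w\le T_-\}\cup\{\arg w\ge T_+\}\cup\{\Re w\le 0\}$, equivalently that the sector $\{T_-<\arg w<T_+\}$ lies in the image; since $T_-<-\pi\alpha/2<\pi\alpha/2<T_+$ (this strict inequality is established right before the statement of Corollary~\ref{cor:sector}), we get $\Omega_3=\{|\arg w|<\pi\alpha/2\}\subset\{T_-<\arg w<T_+\}\subset R_{\alpha,c,n}(\mathbb{D})$. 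Hence all three pieces are contained in $R_{\alpha,c,n}(\mathbb{D})$, and so is their union.

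The only subtlety worth spelling out is why ``the sector $\{T_-<\arg w<T_+\}$ is contained in the image'' follows from the boundary computation. Here I would invoke the facts already assembled: $R_{\alpha,c,n}$ is univalent on $\mathbb{D}$ (Theorem~\ref{thm:main}), its image is a simply connected domain whose boundary is the curve traced by $R_{\alpha,c,n}(e^{i\theta})$, and along that curve the ratio $v_\pm/u_\pm$ was shown to achieve its extreme values $T_+$ (over $x>0$) and $T_-$ (over $y=-x>0$), while the part of the boundary with $x=0$ sits on the imaginary axis $\Re w=0$. Thus no boundary point has argument strictly between $T_-$ and $T_+$ other than points of the negative imaginary/real directions already covered, so the open sector $\{T_-<\arg w<T_+\}$ meets the boundary only possibly along $\Re w\le 0$; since $c^\alpha$ lies in this sector and in the image, connectedness forces the whole sector into the image. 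I would state this as a short remark rather than re-deriving it.

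I do not expect any real obstacle: the corollary is a bookkeeping consequence of Lemmas~\ref{lem:1} and~\ref{lem:2}, the $\alpha=1$ remark, and the pre-Corollary~\ref{cor:sector} analysis, all combined with Theorem~\ref{thm:main}. The mild point requiring care is the case split at $\alpha=1$ (where $\Omega_3$ is a half-plane and $T_\pm$ are not defined by the same formula), and making sure the containment $\Omega_3\subset R_{\alpha,c,n}(\mathbb{D})$ is justified uniformly. A one-line proof suffices once these observations are in place.

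\begin{proof}
By Lemma~\ref{lem:1} we have $\Omega_1\subset R_{\alpha,c,n}(\mathbb{D})$, and by Lemma~\ref{lem:2} we have $\Omega_2\subset R_{\alpha,c,n}(\mathbb{D})$. It remains to check $\Omega_3\subset R_{\alpha,c,n}(\mathbb{D})$. When $\alpha=1$, the set $\Omega_3$ is the right half-plane, which is contained in $V(-C_n(c),C_n(\bar c))=R_{1,c,n}(\mathbb{D})$. When $0<\alpha<1$, the analysis preceding Corollary~\ref{cor:sector} shows that the sector $\{w:T_-<\arg w<T_+\}$ is contained in $R_{\alpha,c,n}(\mathbb{D})$, and since $T_-<-\pi\alpha/2<\pi\alpha/2<T_+$, we obtain
$$
\Omega_3=\left\{w\in\mathbb{C}:|\arg w|<\frac{\pi\alpha}2\right\}
\subset\{w:T_-<\arg w<T_+\}\subset R_{\alpha,c,n}(\mathbb{D}).
$$
Therefore $\Omega=\Omega_1\cup\Omega_2\cup\Omega_3\subset R_{\alpha,c,n}(\mathbb{D})$.

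Now suppose $q\in\mathcal{H}[c^\alpha,n]$ satisfies $q(z)+zq'(z)/q(z)\in\Omega$ for all $z\in\mathbb{D}$. Then $q(z)+zq'(z)/q(z)\in R_{\alpha,c,n}(\mathbb{D})$ for all $z\in\mathbb{D}$, so by Theorem~\ref{thm:main} we conclude $|\arg q|<\pi\alpha/2$ on $\mathbb{D}$.
\end{proof}
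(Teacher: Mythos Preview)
Your proof is correct and follows essentially the same route as the paper: show $\Omega_1,\Omega_2,\Omega_3\subset R_{\alpha,c,n}(\mathbb{D})$ via Lemmas~\ref{lem:1}, \ref{lem:2}, and the sector analysis preceding Corollary~\ref{cor:sector}, then apply Theorem~\ref{thm:main}. The only difference is that you explicitly separate out the case $\alpha=1$ (where the $\Theta_\pm$ of Corollary~\ref{cor:sector} are not defined), whereas the paper's one-line proof simply cites $\Theta_\pm>\pi\alpha/2$; your version is slightly more careful on this edge case but otherwise identical in substance.
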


\begin{proof}
Lemmas \ref{lem:1} and \ref{lem:2} yield that $\Omega_1\cup\Omega_2\subset
R_{\alpha,c,n}(\mathbb{D}).$
Since $\Theta_\pm>\pi\alpha/2,$
we also have $\Omega_3\subset R_{\alpha,c,n}(\mathbb{D}).$
Thus $\Omega\subset R_{\alpha,c,n}(\mathbb{D}).$
Now the result follows from Theorem \ref{thm:main}.
\end{proof}

See Figure 1 for the shape of the domain $\Omega$ together with
$R_{\alpha,c,n}(\mathbb{D}).$
We remark that $\Omega=R_{\alpha,c,n}(\mathbb{D})$ when $\alpha=1.$

\begin{figure}[!bht]
\begin{center}
\scalebox{0.5}[0.46]{\includegraphics{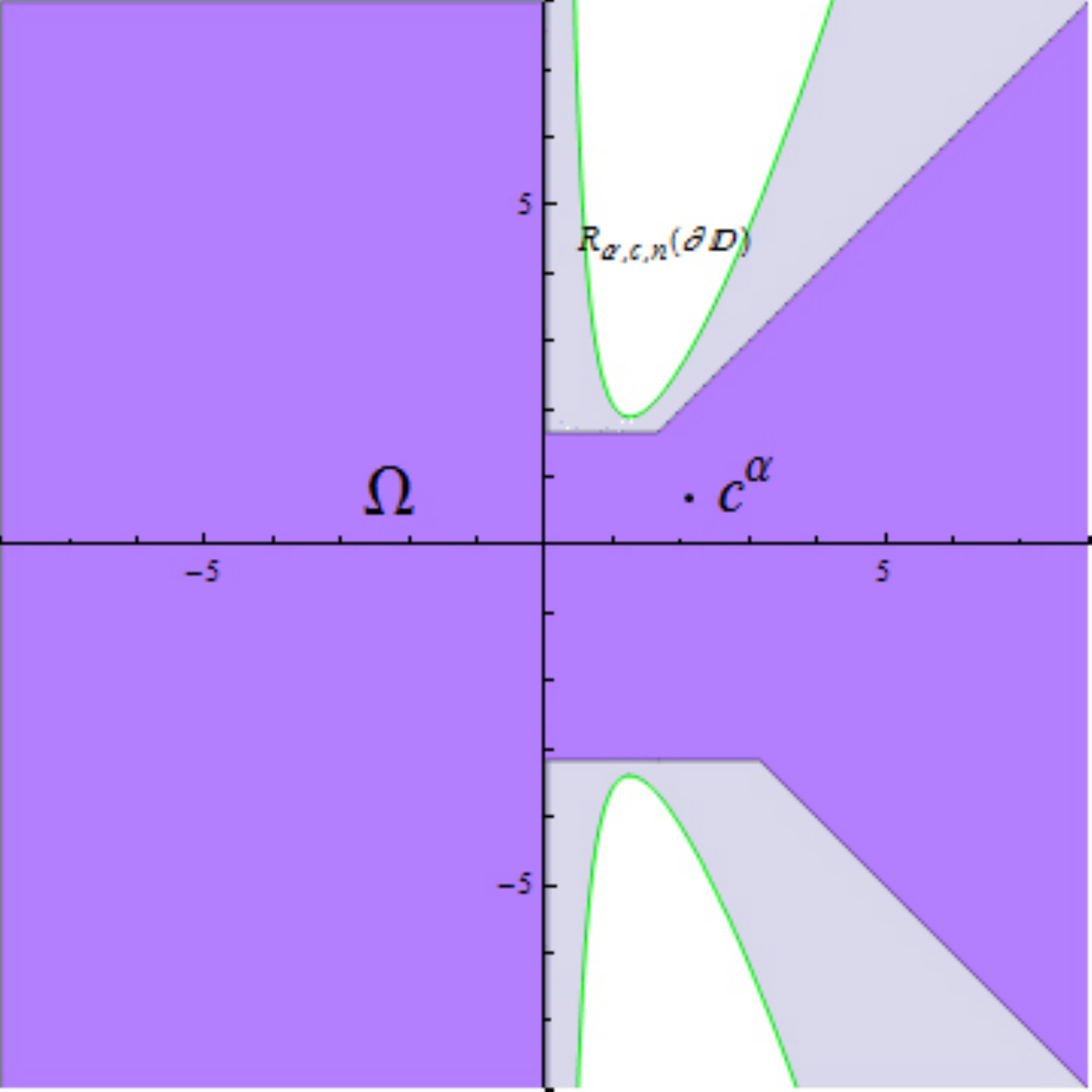}}
\caption{The image $R_{\alpha,c,n}(\mathbb{D})$ and $\Omega$ for $\alpha=1/2, c=4+3i, n=2.$}
\label{fig1}
\end{center}
\end{figure}

\def\cprime{$'$} \def\cprime{$'$} \def\cprime{$'$}
\providecommand{\bysame}{\leavevmode\hbox to3em{\hrulefill}\thinspace}
\providecommand{\MR}{\relax\ifhmode\unskip\space\fi MR }
\providecommand{\MRhref}[2]{%
  \href{http://www.ams.org/mathscinet-getitem?mr=#1}{#2}
}
\providecommand{\href}[2]{#2}

\end{document}